\newcommand{\mR}{\mathbb{R}}
\def\Exp(#1){{\mathbb E}(#1)}
\def\inpr#1,#2{\t \hbox{\langle #1 , #2 \rangle} \t}
\def\ip<#1,#2>{\langle #1,#2 \rangle}
\def\norm#1{\left \Vert #1 \right \Vert}
\def\paren(#1){\left( #1 \right)}
\def\sparen(#1){\Bigl ( #1 \Bigr )}
\def\ssparen(#1){ (#1) }
\def\st{\thinspace : \thinspace}
\def\v3(#1,#2,#3){\begin{pmatrix} #1 \\ #2 \\ #3 \end{pmatrix}}
\newcommand{\vecx}{{\boldsymbol{x}}}
\newcommand{\vecy}{{\boldsymbol{y}}}
\newcommand{\bxi}{\boldsymbol{\xi}}
\newcommand{\zero}{\boldsymbol{0}}
\numberwithin{equation}{section}
\newtheorem{theorem}{\bf Theorem}[section]
\newtheorem{corollary}[theorem]{\bf Corollary}
\theoremstyle{definition}
\newtheorem{definition}[theorem]{\bf Definition}
\theoremstyle{remark}
\newtheorem{remark}[theorem]{\bf Remark}
\numberwithin{equation}{section}
\begin{document}
\title{Polygonal equalities and $p$-negative type}

\author{Ian Doust} 
\address{School of Mathematics and Statistics, University of New South Wales, Sydney, New South Wales 2052, Australia}
\email{i.doust@unsw.edu.au}

\author{Anthony Weston}
\address{Arts and Sciences, Carnegie Mellon University in Qatar, Education City,
PO Box 24866, Doha, Qatar}
\email{aweston2@andrew.cmu.edu}

\keywords{Polygonal equalities, negative type}

\subjclass[2010]{46B85, 30L05, 51K99}
% 46B85(2010–now)Embeddings of discrete metric spaces into Banach spaces; applications in topology and computer science
% 30L05(2010–now)Geometric embeddings of metric spaces
% 51K(1980–now)Distance geometry - not otherwise classified

\begin{abstract}
Nontrivial $p$-polygonal equalities impose certain conditions on the geometry of a metric space $(X,d)$ and so it is of interest to be able to identify the values of $p \in [0,\infty)$ for which such equalities exist. Following work of Li and Weston
\cite{LiW}, Kelleher, Miller, Osborn and Weston
\cite{KMOW} established that if a metric space $(X,d)$ is of $p$-negative type, then $(X,d)$ admits no nontrivial $p$-polygonal equalities if and only if it is of strict
$p$-negative type. In this note we remove the
underlying premise of $p$-negative type from this theorem.
As an application we show that the set of all $p$ for which a
finite metric space $(X,d)$ admits a nontrivial $p$-polygonal
equality is always a closed interval of the form $[\wp, \infty)$,
where $\wp > 0$, or the empty set. It follows that for each
$q \not= 2$, the Schatten $q$-class $\mathcal{C}_{q}$ admits a
nontrivial $p$-polygonal equality for each $p > 0$. Other
spaces with this same property include $C[0, 1]$ and
$\ell_{q}^{(3)}$ for all $q > 2$.
\end{abstract}

\maketitle

%%%%%%%%%%%%%%%%%%%%%%%%%%%%%%%%%%%%%%%%%%%%%%%%%%%%%%%%%%%%%%%%%%%%

\section{Introduction}\label{section-1}

Motivated by Enflo's definition of generalized roundness \cite{En1} and results of Li and Weston \cite{LiW},
Kelleher, Miller, Osborn and Weston \cite{KMOW} introduced the concept of a $p$-polygonal equality in a metric space $(X,d)$.

\begin{definition}
Suppose that $(X,d)$ is a metric space and that $p \ge 0$. A \textbf{$p$-polygonal equality} in $(X,d)$ is an equality of the form
  \begin{equation}\label{p-poly} 
  \sum_{i=1}^s \sum_{j=1}^t m_i n_j d(x_i,y_j)^p
    = \sum_{1 \le i_1<i_2 \le s} m_{i_1} m_{i_2} d(x_{i_1},x_{i_2})^p
     + \sum_{1 \le j_1<j_2 \le t} n_{j_1} n_{j_2} d(y_{j_1},y_{j_2})^p
  \end{equation}
where $x_1,\dots,x_s,y_1,\dots,y_t$ are a collection of
(not necessarily distinct) points in $X$, together with real
number weights $m_1,\dots,m_s,n_1,\dots,n_t$ that satisfy
$m_{1} + \cdots + m_{s} = n_{1} + \cdots + n_{t}$.
\end{definition}

The term `polygonal equality' arises from the work of Elsner et al.~\cite{EHKMZ} who, in studying a conjecture due to Bernius and Blanchard, characterized the finite subsets
$\{\vecx_i\}_{i = 1}^n$ and $\{\vecy_j\}_{j = 1}^n$
of $L_1(\Omega,\mu)$ for which one obtains
\begin{eqnarray}\label{Elsner}
  \sum_{i,j} \norm{\vecx_i - \vecy_j}_1 & = &
  \sum_{i< j} \norm{\vecx_i - \vecx_j}_1 + \sum_{i< j} \norm{\vecy_i - \vecy_j}_1.
  \end{eqnarray}
        
The existence of suitable $p$-polygonal equalities in $(X,d)$ has close connections with the equivalent concepts of $p$-negative type and generalized roundness (see for example \cite{LTW}) and so provides information concerning classical embedding questions.
For instance, Weston \cite{W2017} applied $p$-polygonal
equalities in $L_p$ spaces to show that any two-valued Schauder
basis of $L_p$ must have strict $p$-negative type.
As noted in \cite[Corollary 3.1.3]{W2017}, this property
obstructs the existence of certain types of isometry into
$L_p$ spaces.

In any metric space one can easily obtain `trivial' $p$-polygonal equalities. For example one could make all the weights zero, or choose $x_1 = y_1$, $x_2 = y_2$ and $m_1= m_2 = n_1 = n_2 = 1$. To rule out these uninteresting cases, Kelleher et al.~\cite{KMOW}  introduced the concept of a nontrivial $p$-polygonal equality and proved the following theorem.

\begin{theorem}\label{str-iff-poly}\cite[Lemma 3.21]{KMOW}
Suppose that $(X,d)$ is a metric space of $p$-negative type. Then $(X,d)$ is of strict $p$-negative type if and only if it admits no nontrivial $p$-polygonal equality. 
\end{theorem}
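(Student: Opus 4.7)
The plan is to translate the $p$-polygonal equality into a single quadratic form over distinct points and then invoke the definitions of $p$-negative type and strict $p$-negative type directly. Given any instance of (\ref{p-poly}), let $\{z_1, \dots, z_n\}$ be the distinct points among $\{x_i\}\cup\{y_j\}$ and define net weights $w_k = \sum_{i:\,x_i=z_k} m_i - \sum_{j:\,y_j=z_k} n_j$. Then $\sum_k w_k = \sum_i m_i - \sum_j n_j = 0$, and a direct expansion using $d(z,z)=0$ yields
\begin{equation*}
\sum_{k,\ell} w_k w_\ell\, d(z_k,z_\ell)^p = 2\left(\sum_{i<i'} m_i m_{i'} d(x_i,x_{i'})^p + \sum_{j<j'} n_j n_{j'} d(y_j,y_{j'})^p - \sum_{i,j} m_i n_j d(x_i,y_j)^p\right).
\end{equation*}
Hence (\ref{p-poly}) holds precisely when $\sum_{k,\ell} w_k w_\ell\, d(z_k,z_\ell)^p = 0$, and the polygonal equality should be declared trivial exactly when every $w_k$ vanishes (consistent with the examples in the excerpt). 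The converse translation --- producing a $p$-polygonal equality from a zero-sum weighting $(w_k)$ on distinct points that kills the quadratic form --- is obtained by splitting the $w_k$ into their positive and negative parts: take the $x_i$ to be the points at which $w_k>0$ with weights $m_i=w_k$, and the $y_j$ to be the points at which $w_k<0$ with weights $n_j=-w_k$. The zero-sum condition ensures $\sum_i m_i = \sum_j n_j$.

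Given this correspondence, both implications are essentially immediate. For the forward direction, suppose $(X,d)$ is of strict $p$-negative type and admits a nontrivial $p$-polygonal equality. The correspondence produces distinct points $z_1,\dots,z_n$ together with weights $(w_k)$ summing to zero, not all zero, for which $\sum_{k,\ell} w_k w_\ell\, d(z_k,z_\ell)^p = 0$, contradicting strict $p$-negative type. For the reverse direction, assume $(X,d)$ has $p$-negative type but fails strict $p$-negative type; then there exist distinct points $z_1,\dots,z_n$ and weights $(w_k)$, not all zero, with $\sum_k w_k=0$ and $\sum_{k,\ell} w_k w_\ell\, d(z_k,z_\ell)^p = 0$. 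The reverse translation above then yields a $p$-polygonal equality, and it is nontrivial because the net weights reconstructed from $(m_i)$ and $(n_j)$ are exactly the original $(w_k)$, which are not all zero.

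The main obstacle is purely algebraic bookkeeping: verifying the identity above carefully, especially in the case when some $x_i$ coincides with some $y_j$ (so the two indexed families overlap when collapsed onto $Z$), and confirming that the notion of nontriviality used in \cite{KMOW} matches the condition that the net weights $w_k$ are not identically zero. Once that correspondence is fixed, no analytic input is required: the two conditions on $(X,d)$ are simply two reformulations of the statement that the quadratic form $\sum w_k w_\ell\, d(z_k,z_\ell)^p$ is strictly negative on all nonzero zero-sum weightings supported on distinct points.
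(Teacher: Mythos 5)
Your proposal is correct and follows essentially the same route as the paper: your net-weight correspondence and the identity $\sum_{k,\ell} w_k w_\ell\, d(z_k,z_\ell)^p = -2\gamma_p(Q)$ are exactly the content of equation (\ref{link}) and Theorem~\ref{equiv-form} (the reformulation of nontrivial $p$-polygonal equalities as $\ip<D_p\bxi,\bxi> = 0$ for nonzero $\bxi \in F_0$, with nondegeneracy of a simplex corresponding precisely to the net weights not all vanishing), after which the equivalence with strict $p$-negative type under the $p$-negative type hypothesis is immediate from the definitions, just as in the first paragraph of the paper's proof of Theorem~\ref{main2}.
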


The property of being of strict $p$-negative type (and particularly the property of being of strict $1$-negative type), which arose in the study of isometric embeddings of metric spaces,
has now been linked to properties in many different areas of mathematics, from hyperbolic geometry to graph theory \cite{DWe,DWe2,HLMT,HKM,LaP,LiW}.

Theorem~\ref{str-iff-poly} left open the question of identifying the set 
  \[ P_{(X, d)} = \{p \ge 0 \st \text{$(X,d)$ admits a nontrivial $p$-polygonal equality}\}.  \]

It is the aim of this short note to remove the negative type assumption in Theorem~\ref{str-iff-poly} and to thereby identify the set $P_X = P_{(X, d)}$. Our main result is the following.

\begin{theorem}\label{main}
    Suppose that $(X,d)$ is a metric space and that $p \ge 0$. Then $(X,d)$ is of
    strict $p$-negative type if and only if $(X,d)$ does not admit any nontrivial $p$-polygonal equalities.
\end{theorem}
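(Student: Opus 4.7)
My plan is to recast the $p$-polygonal equality \eqref{p-poly} as the statement that the quadratic form governing $p$-negative type vanishes on a particular weight vector. Given a configuration as in \eqref{p-poly}, I would set $\eta_k = m_k$ for $k = 1, \dots, s$ and $\eta_{s+k} = -n_k$ for $k = 1, \dots, t$, then consolidate coincident points to obtain distinct $z_1, \dots, z_n \in X$ and real weights $\eta_1, \dots, \eta_n$ with $\sum_k \eta_k = 0$. A routine expansion shows that \eqref{p-poly} is equivalent to the vanishing of the quadratic form $S(\eta) = \sum_{k,l} \eta_k \eta_l d(z_k,z_l)^p$, and nontriviality of the equality corresponds exactly to the consolidated weight vector $\eta$ being nonzero, equivalently, to the signed measure $\sum_k \eta_k \delta_{z_k}$ being nonzero. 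Under this dictionary, strict $p$-negative type reads precisely: $S(\eta) < 0$ for every such nonzero $\eta$. The forward direction of Theorem~\ref{main} is then immediate.

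For the converse, I would assume $(X,d)$ is not of strict $p$-negative type, so that there exist distinct $z_1, \dots, z_n \in X$ and weights $\eta_1, \dots, \eta_n$, not all zero, with $\sum_k \eta_k = 0$ and $S(\eta) \geq 0$. If $S(\eta) = 0$, the configuration itself is the required nontrivial $p$-polygonal equality. Otherwise $S(\eta) > 0$, and I must manufacture a nontrivial configuration realising $S = 0$. I first note that $S(\eta) > 0$ forces $\eta$ to be supported on at least three distinct points, since any two-point configuration is of the form $(a, -a)$ with $S = -2a^2 d(z_1, z_2)^p \leq 0$.

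The deformation step is as follows. Pick two points $z_1, z_2$ in the support of $\eta$ and let $\eta'$ be the elementary dipole with weight $+1$ at $z_1$, weight $-1$ at $z_2$, and $0$ elsewhere. Then $\sum_k \eta'_k = 0$, $S(\eta') = -2 d(z_1, z_2)^p < 0$, and $\eta'$ is not a scalar multiple of $\eta$ because the support of $\eta$ has size at least three. Setting $\eta_t = (1-t)\eta + t\eta'$ for $t \in [0,1]$, one has $\sum_k (\eta_t)_k = 0$ throughout, and by linear independence $\eta_t$ is nonzero for every $t \in [0,1]$, while $t \mapsto S(\eta_t)$ is a polynomial in $t$ of degree at most $2$. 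Since $S(\eta_0) > 0$ and $S(\eta_1) < 0$, the intermediate value theorem supplies $t^{\ast} \in (0,1)$ with $S(\eta_{t^{\ast}}) = 0$, yielding the desired nontrivial $p$-polygonal equality. The main subtlety will be the nontriviality bookkeeping along the deformation: the linear independence of $\eta$ and $\eta'$ is precisely what prevents the weight vector from collapsing to zero as $S$ crosses $0$. This argument is self-contained and does not invoke Theorem~\ref{str-iff-poly}; the case in which $(X,d)$ is of $p$-negative type but not strict is absorbed into the ``$S(\eta) = 0$ immediately'' branch above.
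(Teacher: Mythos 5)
Your proposal is correct and follows essentially the same route as the paper: translate (non)trivial $p$-polygonal equalities into the vanishing of $\ip<D_p\bxi,\bxi>$ on nonzero vectors of $F_0$, and in the non-negative-type case run the intermediate value theorem along the segment joining a vector with positive form value to a two-point dipole, checking that the segment avoids $\zero$. Your linear-independence bookkeeping (support of size at least three) is just a repackaging of the paper's coordinate check that $\bxi_s \ne \zero$, so there is nothing substantively different to flag.
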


It is a classical result of Schoenberg
\cite{Sc2} that the set of all $p$ for which a metric space
$(X,d)$ is of $p$-negative type is always a closed interval
of the form $[0, \wp]$, where $\wp \geq 0$, or $[0, \infty)$.
Included here is the possibility that $\wp = 0$, in which
case $[0, \wp] = \{ 0 \}$. However, for finite
metric spaces, it is necessarily the case that $\wp > 0$.
This result is due to Deza and Maehara \cite{Dez} and,
independently, Weston \cite{W1995}. As an application of
Theorem~\ref{main} we show that the set $P_X$ is always a
closed interval of the form
$[\wp, \infty)$, where $\wp > 0$, or the empty set.

In the next section we shall give
the main definitions needed. In Section \ref{sect-3} we shall
give the proof of Theorem~\ref{main}
and examine some of the consequences of this result.

\section{Background and definitions}\label{sect-2}

Unless stated otherwise we shall assume that all metric spaces considered below are finite and have at least two elements. 
Given such a space $X = \{x_1,\dots,x_m\}$ with metric $d$ we can form its \textbf{distance matrix}
$D = \bigl(d(x_i,x_j)\bigr)_{i,j = 1}^{m}$. 
For $p \ge 0$, the \textbf{$p$-distance matrix} for $(X,d)$ is
$D_p = \bigl(d(x_i,x_j)^p\bigr)_{i,j = 1}^{m}$.
Let $F_0$ denote the hyperplane of all vectors
$\bxi = (\xi_1, \dots, \xi_m)^{T} \in \mR^m$ such that $\sum_{i=1}^m \xi_i = 0$. The usual inner product in $\mR^m$ will be denoted by $\ip<\cdot,\cdot>$.

\begin{definition}\label{neg-type}
Let $(X,d)$ be a finite metric space with $m$ points $\{x_{1}, \ldots, x_{m} \}$
and suppose that $p \ge 0$.
We say that $(X,d)$ is of \textbf{$p$-negative type} if
  \begin{equation}\label{p-neg}
    \ip<D_p \bxi,\bxi> = \sum_{i,j} d(x_i,x_j)^p \xi_i \xi_j \le 0
  \end{equation}
for all $\bxi = (\xi_1,\dots,\xi_{m})^{T} \in F_0$.
If strict inequality holds in (\ref{p-neg}) except when $\bxi = \zero$ we say that $(X,d)$ is of \textbf{strict $p$-negative type}.

The \textbf{supremal $p$-negative type of} $(X, d)$
is defined to be the quantity
\[
\wp(X) = \sup \{p \geq 0 \st \text{$(X,d)$ is of $p$-negative
type}\}.
\]
\end{definition}

By extension, a general metric space $(X,d)$ is said to be of
$p$-negative type if every finite metric subspace of $X$ is of
$p$-negative type. Similarly it is of
strict $p$-negative type if every finite metric subspace of $X$
has that property.

It is a classical result of Schoenberg \cite{Sc3} that $(X,d)$ is of $p$-negative type if and only if $(X,d^{p/2})$ embeds isometrically in a Hilbert space. Schoenberg showed that $(X,d)$ is of $p$-negative type for all $p \le \wp(X)$, and so $(X,d)$ embeds isometrically in a Hilbert space if and only if $\wp(X) \ge 2$.

Li and Weston \cite{LiW} showed that for
any finite metric space $(X,d)$ we have:
\begin{enumerate}[(i)]
  \item $(X,d)$ is of strict $p$-negative type for all $p \in [0,\wp(X))$;
  \item If $\wp(X) < \infty$, then $(X,d)$ is of
  $\wp(X)$-negative type but not of strict
  $\wp(X)$-negative type.
\end{enumerate}

The hypothesis that $(X,d)$ is a finite metric space is necessary in (ii); there are infinite metric spaces for which are of strict $\wp(X)$-negative type (see \cite[Example 2]{DWe} and
\cite[Section~5]{LiW}).

The case where $\wp(X) = \infty$ for a general metric space
$(X, d)$ is well understood. Faver et al.\ \cite{Fav}
have shown that $\wp(X) = \infty$ if and only if
$(X, d)$ is an ultrametric space.

In what follows, $(X, d)$ is assumed to be a finite metric
space, unless stated otherwise.

The main step in proving Theorem~\ref{main} is to replace the original form for a $p$-polygonal equality (\ref{p-poly}) with an expression involving the $p$-distance matrix for $(X,d)$. 

A \textbf{signed $(s,t)$-simplex} in $(X,d)$ is a collection of points $x_1,\dots,x_s,y_1,\dots,y_t \in X$ and associated real weights $m_1,\dots,m_s,n_1,\dots,n_t$. We shall denote such a simplex by $Q = [x_1(m_1),\dots,x_s(m_s);y_1(n_1),\dots,y_t(n_t)]
= [x_i(m_i);y_j(n_j)]_{s,t}$. 
It was shown in \cite[Section 3]{KMOW} that condition~(\ref{p-neg}) can be reformulated in terms of the $p$-simplex gaps for such simplices.

The \textbf{$p$-simplex gap} for a signed simplex $Q = [x_i(m_i);y_j(n_j)]_{s,t}$ is the quantity
  \[ \gamma_p(Q)
  = \sum_{i,j} m_i n_j d(x_i,y_j)^p
    -\sum_{1 \le i_1<i_2 \le s} m_{i_1} m_{i_2} d(x_{i_1},x_{i_2})^p
     - \sum_{1 \le j_1<j_2 \le t} n_{j_1} n_{j_2} d(y_{j_1},y_{j_2})^p.\]

In this language, a $p$-polygonal equality is an equality of the form $\gamma_p(Q) = 0$ for some signed
$(s,t)$-simplex $Q$ in $X$.  

\begin{definition}
A signed simplex $Q = [x_i(m_i);y_j(n_j)]_{s,t}$ is
said to be \textbf{completely refined} if the points $x_1,\dots,x_s,y_1,\dots,y_t$ are distinct and the real weights $m_1,\dots,m_s$, $n_1,\dots,n_t$ are all strictly positive.
\end{definition}

Suppose that $Q = [x_i(m_i);y_j(n_j)]_{s,t}$ is a completely refined simplex in $(X,d)$. We can list the elements of $X$ as $x_1,\dots,x_s,y_1,\dots,y_t,z_1,\dots,z_u$ and form
the vector
 \[ \bxi = (m_1,\dots,m_s,-n_1,\dots,-n_t,0,\dots,0)^{T},\] 
which is necessarily a nonzero vector in $F_0$. An elementary calculation then shows that for all $p \ge 0$,
  \begin{equation}\label{link}
 \ip<D_p \bxi,\bxi> = -2 \gamma_p(Q).
  \end{equation}
Conversely, starting from any nonzero $\bxi \in F_0$ one can construct a completely refined simplex $Q$ satisfying (\ref{link}). 

There is a useful dichotomy for signed simplices.
A key result in \cite{KMOW} shows that each signed simplex
$Q$ in $X$ may be algorithmically reduced to a so-called
\textbf{degenerate} signed simplex $Q^{\ast}$ (one in which
all weights are zero) or a completely refined signed
simplex $Q^{\ast}$. In either case, one has
$\gamma_{p}(Q) = \gamma_{p}(Q^{\ast})$ for all $p \geq 0$.
Signed simplices
that can be algorithmically reduced to a completely refined
signed simplex are said to be \textbf{nondegenerate}.
By definition, a \textbf{nontrivial $p$-polygonal equality}
in $(X, d)$ is an equality of the form $\gamma_{p}(X) = 0$
for some nondegenerate signed simplex $Q$ in $X$.
In what follows we shall make use of the following
characterization of nontrivial $p$-polygonal equalities
that is implicit in \cite[Section 3]{KMOW}.

\begin{theorem}\label{equiv-form}
Let $(X,d)$ be a finite metric space and suppose that $p \ge 0$. Then the following conditions are equivalent.
\begin{enumerate}[(i)]
  \item $(X,d)$ admits a nontrivial $p$-polygonal equality.
  \item $\ip<D_p \bxi,\bxi> = 0$ for some nonzero $\bxi \in F_0$.
\end{enumerate}
\end{theorem}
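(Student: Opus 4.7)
The plan is to assemble three ingredients that are laid out in the preceding discussion: the identity (\ref{link}) linking completely refined simplices and nonzero vectors in $F_{0}$, the KMOW reduction algorithm $Q \mapsto Q^{\ast}$ (which preserves the $p$-simplex gap), and the definition of nondegeneracy as reducibility to a completely refined simplex.

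For the direction (i)$\implies$(ii), I would begin with a nondegenerate signed simplex $Q$ satisfying $\gamma_{p}(Q) = 0$, which exists by definition of a nontrivial $p$-polygonal equality. Running the KMOW reduction produces a completely refined simplex $Q^{\ast}$ with $\gamma_{p}(Q^{\ast}) = \gamma_{p}(Q) = 0$. Applying the construction preceding (\ref{link}), I would then form the vector $\bxi \in F_{0}$ whose entries are the signed weights of $Q^{\ast}$ (the $m_{i}$ on the $x_{i}$-coordinates, the $-n_{j}$ on the $y_{j}$-coordinates, and $0$ elsewhere). Since the weights of $Q^{\ast}$ are strictly positive, $\bxi \neq \zero$, and (\ref{link}) gives $\ip<D_{p}\bxi,\bxi> = -2\gamma_{p}(Q^{\ast}) = 0$.

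For (ii)$\implies$(i), starting from a nonzero $\bxi = (\xi_{1},\dots,\xi_{m})^{T} \in F_{0}$ with $\ip<D_{p}\bxi,\bxi> = 0$, I would partition the coordinate indices by sign: each index with $\xi_{i} > 0$ contributes an $x_{i}$ with weight $m_{i} = \xi_{i}$, each index with $\xi_{i} < 0$ contributes a $y_{j}$ with weight $n_{j} = -\xi_{i}$, and indices with $\xi_{i} = 0$ are discarded. Since $\bxi \neq \zero$ and $\sum \xi_{i} = 0$, both positive and negative entries must occur, so $s,t \geq 1$ and the weight-balance condition $\sum_{i} m_{i} = \sum_{j} n_{j}$ is automatic. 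The resulting signed $(s,t)$-simplex $Q$ is completely refined, hence nondegenerate, and (\ref{link}) yields $\gamma_{p}(Q) = -\tfrac{1}{2}\ip<D_{p}\bxi,\bxi> = 0$, i.e.\ a nontrivial $p$-polygonal equality.

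No step presents a real obstacle: the main thing to verify is that the two constructions are genuine inverses at the level of \emph{completely refined} simplices and nonzero vectors in $F_{0}$, and that this correspondence matches the hypothesis of nontriviality via the KMOW reduction. Both facts are explicitly recorded in the paragraph containing (\ref{link}) and in the dichotomy stated just before the theorem, so the argument amounts to careful bookkeeping rather than to any substantive new calculation.
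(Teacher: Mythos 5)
Your argument is correct and follows exactly the route the paper intends: the paper leaves this theorem's proof implicit, resting precisely on the identity (\ref{link}), the sign-splitting construction of a completely refined simplex from a nonzero vector in $F_0$, and the KMOW reduction dichotomy, which are the three ingredients you assemble. Nothing further is needed.
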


\begin{remark}
    In general, finding a nonzero $\bxi \in F_0$ such that $\ip<D_p \bxi,\bxi> = 0$ can be a complicated task. However,
    the situation that arises
    when $p = \wp(X) < \infty$ is considerably
    simpler. S{\'a}nchez \cite[Corollary 2.5]{San}
    showed that if  $D_{\wp(X)}$  is singular then $\ker(D_{\wp(X)}) \subseteq F_0$ and so any nonzero
    $\bxi \in \ker(D_{\wp(X)})$ satisfies $\ip<D_{\wp(X)} \bxi,\bxi> = 0$.
    On the other hand, if $D_{\wp(X)}$  is invertible,  we can take $\bxi = D_{\wp(X)}^{-1} (1,1,\dots,1)^T$. Then $\zero \ne \bxi \in F_0$ and $\ip<D_{\wp(X)} \bxi,\bxi> = 0$.
\end{remark}

\section{Proofs}\label{sect-3}

In light of Theorem~\ref{equiv-form} we can rephrase Theorem~\ref{main} in the case of finite spaces as follows.

\begin{theorem}\label{main2}
Suppose that  $(X,d) = (\{x_1, x_2,\dots, x_m\},d)$ is a finite metric space with $m \ge 2$ and that $p \ge 0$. Then the following are equivalent.
\begin{enumerate}[(i)]
  \item $(X,d)$ is of strict $p$-negative type.
  \item $(X,d)$ admits no nontrivial $p$-polygonal equalities.
  \item $\ip<D_p \bxi,\bxi> \ne 0$ for all nonzero $\bxi \in F_0$.
\end{enumerate}
\end{theorem}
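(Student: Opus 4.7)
The plan is to prove the cycle by showing (i)$\Rightarrow$(iii)$\Rightarrow$(ii)$\Rightarrow$(i). The implications involving (ii) are essentially free: Theorem~\ref{equiv-form} gives (ii)$\Leftrightarrow$(iii), and the definition of strict $p$-negative type gives (i)$\Rightarrow$(iii) immediately. So the only substantive step, and the one that removes the $p$-negative type hypothesis present in Theorem~\ref{str-iff-poly}, is (iii)$\Rightarrow$(i).

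For (iii)$\Rightarrow$(i) the idea I would pursue is a topological one. Consider the continuous quadratic function $q(\bxi) = \ip<D_p\bxi,\bxi>$ on the hyperplane $F_0$. Assuming (iii), its restriction to the Euclidean unit sphere $S$ of $F_0$ is a continuous function that never vanishes. When $m \geq 3$ the set $S$ is a sphere of dimension $m-2 \geq 1$ and is therefore path-connected, so $q$ must have constant sign on $S$. By homogeneity $q(t\bxi) = t^2 q(\bxi)$, so $q$ then has the same constant sign on the whole of $F_0 \setminus \{\zero\}$.

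To pin that sign down as negative, I would test $q$ on the explicit vector $\bxi = (1,-1,0,\ldots,0)^{T}$, which lies in $F_0$. A direct computation, using only the fact that $D_p$ has zero diagonal and symmetric off-diagonal entries, gives
\[
q(\bxi) = -2\,d(x_1,x_2)^p < 0,
\]
since $x_1 \ne x_2$. Combined with the constant-sign conclusion of the previous paragraph, this yields $q < 0$ throughout $F_0 \setminus \{\zero\}$, which is exactly strict $p$-negative type. The boundary case $m=2$ requires no topology, since $F_0$ is then one-dimensional and spanned by $(1,-1)^{T}$; the same computation shows $q < 0$ on $F_0 \setminus \{\zero\}$ directly.

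The main obstacle, insofar as there is one, is recognizing that assumption (iii) itself forces sign-definiteness of the quadratic form on $F_0$, so that no external appeal to $p$-negative type is needed; once this continuity-and-connectedness observation is in place the remainder is a one-line exhibition of a test vector. I do not anticipate any difficulty with degenerate cases because the entries $d(x_i,x_j)$ with $i \ne j$ are strictly positive throughout.
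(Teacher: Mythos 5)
Your proposal is correct and takes essentially the same route as the paper: the equivalences involving (ii) are delegated to Theorem~\ref{equiv-form}, and the substantive implication is settled by a continuity/connectedness argument on $F_0$ anchored by the test vector $\bxi_0=(1,-1,0,\dots,0)^{T}$, for which $\ip<D_p\bxi_0,\bxi_0>=-2d(x_1,x_2)^p<0$. The only difference is cosmetic: the paper applies the Intermediate Value Theorem along the segment joining $\bxi_0$ to a vector where the form is positive (and must then check that the resulting zero of the form is a nonzero vector), whereas you use connectedness of the unit sphere of $F_0$ together with homogeneity, which sidesteps that check at the cost of a separate, trivial $m=2$ case.
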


\begin{proof} By Theorem~\ref{equiv-form} it suffices to prove that (i) and (iii) are equivalent. 

It follows from the definitions that if $(X,d)$ is of strict negative type then (iii) holds, while if $(X,d)$ is of $p$-negative type, but not strict $p$-negative type, then there exists a nonzero $\bxi \in F_0$ such that $\ip<D_p \bxi,\bxi> = 0$. The issue then is whether it is possible to have a space which is not of $p$-negative type for some $p$, and which does not admit any nontrivial $p$-polygonal equalities.

Suppose that $(X,d)$ is not of $p$-negative type. Then there is some (necessarily) nonzero $\bxi_1 = (\alpha_1,\alpha_2,\dots,\alpha_m)^{T} \in F_0$ such that $\ip<D_p \bxi_1,\bxi_1> > 0$. On the other hand, if we let $\bxi_0 = (1,-1,0,\dots,0)^{T}$, then $\ip<D_p \bxi_0,\bxi_0> = -2d(x_1, x_2)^p < 0$. For $0 < t < 1$ let $\bxi_t = (1-t)\bxi_0 +t \bxi_1$. Then certainly $\bxi_t \in F_0$ for all $t$, and by the Intermediate Value Theorem, there exists $s \in (0,1)$ such that $\ip<D_p \bxi_s,\bxi_s> = 0$. It just remains to check that $\bxi_s \ne \zero$. 

Suppose then that 
  \[ \bxi_s = (1-t+t \alpha_1,t-1+t \alpha_2,t\alpha_3,\dots,t\alpha_m)^{T}  = \zero.\] 
This immediately implies that $\alpha_j = 0$ for $3 \le j \le m$. As $\bxi_1 \in F_0$ this means that $\alpha_2 = -\alpha_1$. But if this is the case then $\ip<D_p \bxi_1,\bxi_1> < 0$ which is a contradiction. Thus $\bxi_s \ne \zero$ and so we have a nontrivial $p$-polygonal equality in $(X,d)$. 
\end{proof}

Since the existence of a nontrivial $p$-polygonal equality is also determined by what happens on the finite subsets of $X$, Theorem~\ref{main} follows immediately from Theorem~\ref{main2}.

\begin{corollary}\label{poly-int}
If $(X,d)$ is a finite metric space with $\wp(X)$ finite, then 
  \[ P_X = \{p \ge 0 \st \text{$(X,d)$ admits a nontrivial $p$-polygonal equality}\} = [\wp(X),\infty) .\]
\end{corollary}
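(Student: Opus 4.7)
The plan is to partition the range $[0,\infty)$ into three pieces, $[0,\wp(X))$, $\{\wp(X)\}$, and $(\wp(X),\infty)$, and use Theorem~\ref{main} together with the Li--Weston facts (i) and (ii) recalled in Section~\ref{sect-2} to determine for each $p$ whether $p \in P_X$.

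First, for $p \in [0,\wp(X))$ I would invoke Li--Weston (i), which tells us that $(X,d)$ is of strict $p$-negative type. Theorem~\ref{main} then gives that $(X,d)$ admits no nontrivial $p$-polygonal equality, so $[0,\wp(X)) \cap P_X = \emptyset$. At the endpoint $p = \wp(X)$, the finiteness of $X$ together with $\wp(X) < \infty$ lets me invoke Li--Weston (ii): $(X,d)$ is of $\wp(X)$-negative type but not of strict $\wp(X)$-negative type. Applying Theorem~\ref{main} in the contrapositive direction, $(X,d)$ must admit a nontrivial $\wp(X)$-polygonal equality, so $\wp(X) \in P_X$.

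The remaining case is $p > \wp(X)$. Here the definition of $\wp(X)$ as the supremal $p$-negative type forces $(X,d)$ to fail $p$-negative type, and hence in particular to fail strict $p$-negative type. Theorem~\ref{main} then yields a nontrivial $p$-polygonal equality, so $(\wp(X),\infty) \subseteq P_X$. Combining the three cases gives the claimed equality $P_X = [\wp(X),\infty)$.

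There is no substantial obstacle here: the corollary is essentially an immediate bookkeeping consequence of Theorem~\ref{main} once the Li--Weston dichotomy (i)--(ii) and the definition of $\wp(X)$ are in hand. The only point that deserves a sentence of justification is the last case, where one must note that $p > \wp(X)$ means $(X,d)$ is not of $p$-negative type, so a strictly positive value of $\ip<D_p \bxi,\bxi>$ arises and strict $p$-negative type automatically fails; Theorem~\ref{main} then does all the work.
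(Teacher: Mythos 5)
Your proposal is correct and follows essentially the same route as the paper: Li--Weston (i) plus Theorem~\ref{main} for $p<\wp(X)$, Li--Weston (ii) plus Theorem~\ref{main} at $p=\wp(X)$, and the failure of (strict) $p$-negative type beyond the supremum plus Theorem~\ref{main} for $p>\wp(X)$. The paper's proof is just a compressed version of exactly this case split.
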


\begin{proof}
By Li and Weston \cite{LiW}, $(X,d)$ is of strict $p$-negative
type for all $p < \wp(X)$ and $(X,d)$ admits a nontrivial
$\wp(X)$-polygonal equality. Now apply Theorem~\ref{main2}
for all $p \not= \wp(X)$.
\end{proof}

\begin{remark}
In the case of an infinite metric space
$(X,d)$ with $\wp(X) < \infty$ we cannot, in general, draw
the exactly same conclusion as that given in Corollary
\ref{poly-int}. This is because there exist infinite metric
spaces that are of strict supremal $p$-negative type. An
example of this phenomenon is the infinite metric tree given in Doust and Weston \cite[Example 2]{DWe}. So in this case
the best that we can say is that $P_X$ is one of the intervals
$[\wp(X), \infty)$ or $(\wp(X), \infty)$. An exception to
this arises when $\wp(X) = 0$, wherein one necessarily obtains
$P_{X} = (0, \infty)$. The first example of a metric space
$(X, d)$ with $\wp(X) = 0$ was constructed by Enflo \cite{En1}.
A more recent result of Dahma and Lennard \cite{Dah} showed
that the Schatten $q$-class $\mathcal{C}_{q}$, $q \not= 2$,
does not have $p$-negative type for any $p > 0$. In other
words, $\wp(\mathcal{C}_{q}) = 0$ for all $q \not= 2$. It
follows from Corollary \ref{poly-int} that $\mathcal{C}_{q}$,
$q \not= 2$, admits a nontrivial $p$-polygonal equality
for all $p > 0$. This settles an open question raised by
Kelleher et al.\ \cite[Section 6]{KMOW}.
\end{remark}

\begin{remark}
A simple example of a finite metric space $(X, d)$ for which
$P_X$ is the empty set is a two point discrete metric space.
For all $p \ge 0$, $\ip<D_p \bxi,\bxi> = -2 \xi_1^2$ for $\bxi \in F_0$,
and so one may apply Theorem~\ref{main2}. More generally,
Faver et al.\ \cite[Section 5]{Fav} have shown that $P_{(X, d)}$
is the empty set if and only if $(X, d)$ is an ultrametric space.
\end{remark}

%%%%%%%%%%%%%%%%%%%%%%%%%%%%%%%%%%%%%%%%%%%%%%%%%%%%%%%%%%%%%%%%%
\bibliographystyle{amsalpha}

\end{document}